\theoremstyle{plain}
 \newtheorem{thm}{Theorem}[section]
 \newtheorem{prop}{Proposition}[section]
 \newtheorem{cor}{Corollary}[section]
\theoremstyle{definition}
\theoremstyle{remark}
 \newtheorem{rem}{Remark}[section]
 \numberwithin{equation}{section}
\renewcommand{\le}{\leqslant}
\renewcommand{\ge}{\geqslant}
\title[Topology of the elliptical billiard with the Hooke's potential]{Topology of the elliptical billiard with the Hooke's potential}
\subjclass[2010]{Primary 37J35; Secondary 70H06}
\keywords{elliptical billiards, confocal conics, integrable potential perturbations, Hooke's potential field, Fomenko graphs}
\author[Radnovi\'c]{\bfseries Milena Radnovi\'c}
\address{
Mathematical Institute SANU, Kneza Mihaila 36, Belgrade, Serbia
\newline\indent School of Mathematics and Statistics, University of Sydney, NSW, Australia}
\email{milena@mi.sanu.ac.rs, milena.radnovic@sydney.edu.au}
\begin{document}

 \setcounter{page}{1} \thispagestyle{empty}

\begin{abstract}
Using Fomenko graphs, we present a topological description of the elliptical billiard with Hooke's potential.
\end{abstract}


\maketitle
\tableofcontents

\section{Introduction}\label{sec:intro}
Mathematical billiard is a system where a material point moves with a constant velocity inside the billiard desk, while the impacts off the boundary are ideally elastic and the velocity is changed according to the billiard reflection law \cite{KozTrBIL}.
The boundary of the billiard desk can be any piecewise smooth curve.

A remarkable class of such systems are elliptical billiards, due to their nice geometrical properties.
Each trajectory of elliptical billiard has a \emph{caustic} -- a curve touching all segments, or their straight extensions, of the trajectory.
Moreover, the caustic is a conic confocal with the boundary and it represents a geometrical manifestation of the integral of motion \cites{ArnoldMMM,KozTrBIL,DragRadn2011book}.

One can consider billiard motion in a potential field as well.
Integrable classes of potential perturbations of elliptical billiards were studied in \cites{Kozlov1995,Drag1996,Drag2002}.
The Hooke's potential belongs to that class and the dynamics of such a billiard was studied in \cite{Fed2001}, where the Lax representation of that system was constructed and a general solution obtained in terms of theta-functions.
Some remarkable geometrical properties of the system are also proved in \cite{Fed2001}: each trajectory has two caustics which are confocal with the boundary.

The objective of this work is to give topological description of the elliptical billiard with the Hooke's potential.
To achieve that, we are using tools, widely known as the Fomenko graphs, which codify  the topological essence of the Liouville foliation of integrable systems with two degrees of freedom.
The detailed account on Fomenko graphs can be found in \cite{BolFomBOOK}, see also \cites{BMF1990,BO2006,BBM2010} and references therein.
For topological description of elliptical billiards without potential see \cites{DragRadn2009,DragRadn2013publ,DragRadn2015fom} and for billiards within confocal parabolas \cite{Fokicheva2014}.

Next, in Section \ref{sec:integration}, we review the main steps from \cite{Fed2001} of the integration procedure of the elliptical billiard within an ellipse in the presence of the Hooke's potential.
Section \ref{sec:topology} contains topological description of that system.
The main result is Theorem \ref{th:fomenko}, where the Fomenko ivariants for the isoenergy manifolds are calculated.
For the critical closed orbits, we discuss their stability in Corollary \ref{cor:stability}.

\section{Integrability of the system}\label{sec:integration}
In \cite{Fed2001}, billiard within an ellipsoid in the $n$-dimensional space with the Hooke's potential was considered.
In this section, we review the integration procedure from \cite{Fed2001} in the planar case: $n=2$.

Suppose that in the field with the \emph{Hooke's potential}:
\begin{equation}\label{eq:potential}
\frac{\sigma}2(x^2+y^2),\quad \sigma>0,
\end{equation}
a particle of unit mass moves within ellipse:
\begin{equation}\label{eq:E}
\mathcal E\ :\ \frac{x^2}{a}+\frac{y^2}{b}=1,
\quad
a>b>0,
\end{equation}
satisfying the billiard law when being reflected off the boundary.

Let $\mathcal B\ :\ (\xi,v)\mapsto(\tilde\xi,\tilde v)$ be the billiard
mapping, where $\xi, \tilde\xi\in\mathcal E$ are consecutive points of
reflection off the boundary, and $v,\tilde v$ the velocity vectors at
$\xi,\tilde\xi$ after the reflection.
Explicit formulae for $\mathcal B$ are:
\begin{gather*}
\tilde\xi =
 - \frac1{\nu} \big(\sigma\xi-(v,Av)\xi+2(\xi,Av)v\big),
  \\
\tilde v =
 - \frac1{\nu} \big(\sigma v-(v,Av)v-2\sigma(\xi,Av)\xi\big)
 + \mu A\tilde\xi,
  \\
\nu=\sqrt{4\sigma(\xi,Av)^2+(\sigma-(v,Av))^2},
\quad
\mu=\frac{2(\tilde v,A\tilde\xi)}{(\tilde\xi,A^{2}\tilde\xi)},
\quad
A=\left(\begin{array}{cc}
         1/a & 0\\
         0 & 1/b
        \end{array}
\right).
\end{gather*}

It turns out that the billiard mapping $\mathcal B$ has the Lax pair representation.
Up to the symmetry
$(\xi,v)\mapsto(-\xi,-v)$,
the billiard mapping is equivalent to the equation
$$
\tilde L(\lambda)=M(\lambda)L(\lambda)M^{-1}(\lambda),
$$
where:
\begin{gather*}
L(\lambda)=\left(
 \begin{array}{cc}
q_{\lambda}(\xi,v) & q_{\lambda}(v,v)-\sigma\\
-q_{\lambda}(\xi,\xi)+1 & -q_{\lambda}(\xi,v)
\end{array}\right),
    \\
M(\lambda)=\left(\begin{array}{cc}
 \sigma\lambda-(v,Av)\lambda+2(\xi,Av)\mu &
 2\sigma(\xi,Av)\lambda-\sigma\mu+(v,Av)\mu
\\
 -2(\xi,Av)\lambda & \sigma\lambda-(v,Av)\lambda
\end{array}\right),
\end{gather*}
with $\tilde L(\lambda)$ depending on $\tilde \xi, \tilde v$ in the same way as $L(\lambda)$ on $\xi,v$,
and
$$
q_{\lambda}(\xi,\eta)=\frac{\xi_1\eta_1}{a-\lambda}+\frac{\xi_2\eta_2}{b-\lambda}.
$$

The billiard trajectories have the following geometrical properties.

Each segment of a given billiard trajectory within $\mathcal{E}$ is an arc of an ellipse with the centre at the coordinate origin \cite{ArnoldMMM}.
Moreover, as proved in \cite{Fed2001}, all these ellipses are touching the same pair of conics, confocal with $\mathcal{E}$.

If we denote the family of conics confocal with $\mathcal{E}$ as follows:
\begin{equation}\label{eq:confocal}
\mathcal{C}_{\lambda}\ :\ \frac{x^2}{a-\lambda}+\frac{y^2}{b-\lambda}=1,
\end{equation}
then the parameters of the caustics of a given trajectory are the roots of the characteristic polynomial:
\begin{equation}\label{eq:poly}
p(\lambda)=(\lambda-a)(\lambda-b)\det L(\lambda).
\end{equation}

\section{Topological properties}\label{sec:topology}
The aim of this section is to give the topological characterisation of the isoenergy manifolds.

In the next proposition, we summarise some geometrical properties of the system.

\begin{prop}\label{prop:kaustike}
Consider a given billiard trajectory within the ellipse $\mathcal{E}$ (\ref{eq:E}) with the Hooke's potential (\ref{eq:potential}).
Let $E$ be the total energy corresponding to that trajectory and $\mathcal{C}_{\lambda_1}$, $\mathcal{C}_{\lambda_2}$ its caustics, $\lambda_1\le\lambda_2$.
Then:
\begin{itemize}
\item 
$\lambda_1+\lambda_2=a+b-\frac{2}{\sigma} E$;
\item
$\mathcal{C}_{\lambda_1}$ is an ellipse and it contains $\mathcal{E}$, i.e. $\lambda_1\le0$;
\item
if a segment of the trajectory contains a focus of $\mathcal{E}$, then the next segment contains the other focus.
$\mathcal{C}_{\lambda_2}$ is then the degenerate conic: $\lambda_2=b$;
\item
if a segment of the trajectory itersects the segment containing the foci, than each segment intersects it as well.
$\mathcal{C}_{\lambda_2}$ is then a hyperbola or the degenerate conic that coincides with the $y$-axis: 
$b<\lambda_2\le a$;
\item
if a segment of the trajectory does not itersect the segment containing the foci, than none of the segments intersect it.
$\mathcal{C}_{\lambda_2}$ is then an ellipse within $\mathcal{E}$: $0\le\lambda_2<b$.
\end{itemize}
\end{prop}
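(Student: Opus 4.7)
The plan is to handle the five bullets in turn: the first two follow from direct algebra on the characteristic polynomial $p(\lambda) = (\lambda-a)(\lambda-b)\det L(\lambda)$, while the last three follow from the geometric classification of the inner caustic $\mathcal{C}_{\lambda_2}$.

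First I would compute $p(\lambda)$ explicitly. Setting $J = \xi_1 v_2 - \xi_2 v_1$ and using the confocal identity $q_\lambda(\xi,\xi)\,q_\lambda(v,v) - q_\lambda(\xi,v)^2 = J^2/\bigl((a-\lambda)(b-\lambda)\bigr)$, the factor $(\lambda-a)(\lambda-b)$ clears all denominators, and after collecting powers of $\lambda$ one obtains
\[
p(\lambda) = \sigma\lambda^2 + \bigl(2E - \sigma(a+b)\bigr)\lambda + C,
\]
where $E = \tfrac12|v|^2 + \tfrac{\sigma}{2}|\xi|^2$ and $C$ is a further constant of motion. Vieta's formulas give $\lambda_1 + \lambda_2 = a + b - 2E/\sigma$, which is the first bullet. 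For the second bullet I would evaluate $p$ at $\lambda = 0$ and $\lambda = a$ at a reflection point, using $b\xi_1^2 + a\xi_2^2 = ab$ to simplify:
\[
p(0) = J^2 - (bv_1^2 + av_2^2), \qquad p(a) = (a-b)(v_1^2 + \sigma\xi_1^2) + J^2.
\]
The Cauchy--Schwarz inequality applied to $(\xi_1/\sqrt a, \xi_2/\sqrt b)$ and $(\sqrt a\,v_2, -\sqrt b\,v_1)$, combined with $\xi_1^2/a + \xi_2^2/b = 1$, yields $J^2 \le av_2^2 + bv_1^2$, so $p(0) \le 0$; meanwhile $p(a) \ge 0$ by inspection. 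Since $p$ opens upward, its roots obey $\lambda_1 \le 0 \le \lambda_2 \le a$.

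For bullets three through five, the position of $\lambda_2$ in $[0,a]$ dictates the type of $\mathcal{C}_{\lambda_2}$ via (\ref{eq:confocal}): an ellipse inside $\mathcal{E}$ when $0 \le \lambda_2 < b$, the degenerate focal segment when $\lambda_2 = b$, a hyperbola when $b < \lambda_2 < a$, and the $y$-axis when $\lambda_2 = a$. Since each trajectory segment is an arc of an ellipse centred at the origin tangent to $\mathcal{C}_{\lambda_2}$, a confocal tangency argument shows that the arc crosses the focal segment $[-\sqrt{a-b}, \sqrt{a-b}]\times\{0\}$ precisely when $\lambda_2 > b$ and touches it at a focus precisely when $\lambda_2 = b$. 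The invariance of $\lambda_2$ under $\mathcal{B}$, ensured by the conjugation $\tilde L = MLM^{-1}$, then propagates this geometric type to every segment of the trajectory. For the focus-alternation in bullet three, the reflection law combined with the symmetry $(\xi,v)\mapsto(-\xi,-v)$ of Hooke's dynamics forces the tangent point on the degenerate conic to alternate between the two foci.

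The main obstacle is the geometric step: because trajectory segments are ellipse arcs rather than straight lines, the classical argument identifying the focal-crossing dichotomy with the sign of $\lambda_2 - b$ does not transfer verbatim from the free Birkhoff case and must be re-established for arcs, either by a direct computation from the explicit Hooke trajectory $X(t) = \xi\cos(\sqrt\sigma\,t) + v\sin(\sqrt\sigma\,t)/\sqrt\sigma$ or by a continuity argument as $\lambda_2$ crosses $b$.
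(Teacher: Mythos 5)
Your proposal is correct and, on the first two bullets, is actually more detailed than the paper's own argument; on the second bullet it takes a genuinely different route. The paper obtains $\lambda_1+\lambda_2$ exactly as you do, from the two leading coefficients of $p(\lambda)$. For $\lambda_1\le0$, however, the paper argues geometrically: the outer caustic is the ellipse touched by the full origin-centred ellipses containing the trajectory arcs, and since those arcs reach the boundary, that caustic must contain $\mathcal{E}$. Your sign analysis of $p(0)$ and $p(a)$ at a reflection point (via Cauchy--Schwarz together with $b\xi_1^2+a\xi_2^2=ab$) is a clean algebraic substitute that additionally delivers $0\le\lambda_2\le a$ --- a fact needed for the remaining bullets that the paper never isolates --- and it ties in directly with the inequality $p(0)\le0$ used later to define $\mathcal{M}_B$. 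For bullets three to five the paper is at least as terse as you are: it merely states that the other caustic is an inscribed ellipse, a hyperbola, or a degenerate conic, and declares the rest proved; neither the focal alternation nor the dichotomy between arcs that do and do not cross the focal segment is argued there. So the obstacle you honestly flag at the end --- that the classical line-based caustic dichotomy must be re-established for ellipse arcs --- is real, but it is a gap shared with (indeed, left open by) the published proof rather than a defect specific to your approach, and either of your two suggested remedies (the explicit Hooke flow or a continuity argument in $\lambda_2$) would close it.
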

\begin{proof}
The leading coefficient of the characteristic polynomial $p(\lambda)$ (\ref{eq:poly}) is equal to $\sigma$, and the coefficient multiplying $\lambda$ is $v^2+\sigma\xi^2-\sigma(a+b)=2E-\sigma(a+b)$, from where the first statment follows.

One of the caustics will be the ellipse containing the extensions of the billiard segments.
Since the billiard impacts are off $\mathcal{E}$, that caustic need to satisfy the second statement.

The other caustic is either an ellipse inscribed in the trajectory, a hyperbola, or a degenerate conic, which proves the rest of the proposition.
\end{proof}

We will refer to the conic $\mathcal{C}_{\lambda_1}$ from Proposition \ref{prop:kaustike} as \emph{the outer caustic} of the given billiard trajectory and to $\mathcal{C}_{\lambda_2}$ as \emph{the inner caustic}.
Proposition \ref{prop:kaustike} shows that the inner caustic has the properties analoguous to the propeties of the unique caustic in the case of the elliptical billiard without potential.
The billiard with the Hooke's potential has the famous focal property as well.

The phase space of the elliptical billiard is:
$$
\mathcal{M}=\{
(\xi,v) \mid \xi \text{ is within } \mathcal{E} \text{ and } v\in T_{\xi}\mathbf{R}^2
\}/\sim
$$
with
\begin{gather*}
(\xi,v_1)\sim(\xi,v_2)
\\
\Longleftrightarrow
\\
\xi\in\mathcal{E}
\text{ and }
v_1+v_2\in T_{\xi}\mathcal{E}
\text{ and }
v_1-v_2\perp T_{\xi}\mathcal{E}.
\end{gather*}

Note that $\mathcal{M}$ contains trajectories that have empty intersection with the boundary -- these are trajectories with the outer caustic within $\mathcal{E}$.
A trajectory with the caustics $\mathcal{C}_{\lambda_1}$, $\mathcal{C}_{\lambda_2}$ will have a non-empty intersection with $\mathcal{E}$ if and only if the outer caustic contains $\mathcal{E}$ and the inner caustic is either ellipse within $\mathcal{E}$ or a hyperbola, which is equivalent to
 $
 \lambda_1\lambda_2\le0,
 $
i.~e.~the constant term of the characteristic polynomial (\ref{eq:poly}) need to be non-positive:
$$
(v_2\xi_1-v_1\xi_2)^2-(bv_1^2+av_2^2)-\sigma(b\xi_1^2+a\xi_2^2)+ab\sigma\le0.
$$

So, we will consider the following subset of $\mathcal{M}$:
$$
\mathcal{M}_B=\{ (\xi,v)\in\mathcal{M} \mid  (v_2\xi_1-v_1\xi_2)^2-(bv_1^2+av_2^2)-\sigma(b\xi_1^2+a\xi_2^2)+ab\sigma\le0 \}.
$$

Denote by $\mu$ be the following map:
$$
\mu\ :\ \mathcal{M}_B\to\mathbf{R}^2,
\quad
(\xi,v)\mapsto(E,\lambda_2),
$$
with
\begin{gather*}
E(\xi,v)=\frac{v^2}2+\frac{\sigma\xi^2}2,
\\
\lambda_2(\xi,v)=
\frac{\sigma(a+b)-v^2-\sigma\xi^2+\sqrt{(\sigma(a+b)-v^2-\sigma\xi^2)^2-4\sigma p(0)}}
{2\sigma},
\\
p(0)=(v_2\xi_1-v_1\xi_2)^2-(bv_1^2+av_2^2)-\sigma(b\xi_1^2+a\xi_2^2)+ab\sigma.
\end{gather*}
  
It maps each point of $\mathcal{M}_B$ to the pair $(E,\lambda_2)$ corresponding to the total energy and the parameter of the inner caustic.
From Proposition \ref{prop:kaustike}, it follows that the image of $\mu$ is the set:
$$
\left\{(E,\lambda_2)\mid 0\le\lambda_2\le a\text{ and }\lambda_2+\frac{2}{\sigma} E\ge a+b\right\},
$$
which is shown in Figure \ref{fig:bif}.

\begin{figure}[h]
\centering
\begin{pspicture}(-1.5,-1.5)(5.5,3.5)

\psset{linecolor=black,fillstyle=solid}


\psline{->}(-1,-0.5)(-1,3)
\rput(-1.3,3){$\lambda_2$}

\psline(-1.09,0)(-0.91,0)
\rput(-1.3,0){$0$}

\psline(-1.09,1)(-0.91,1)
\rput(-1.3,1){$b$}
 
\psline(-1.09,2.5)(-0.91,2.5)
\rput(-1.3,2.5){$a$}


\psline{->}(-0.5,-1)(5,-1)
\rput(5,-1.3){$E$}

\psline(0,-1.09)(0,-0.91)
\rput(0,-1.4){$\frac{\sigma b}{2}$}

\psline(1.5,-1.09)(1.5,-0.91)
\rput(1.5,-1.4){$\frac{\sigma a}{2}$}

\psline(2.5,-1.09)(2.5,-0.91)
\rput(2.5,-1.4){$\frac{\sigma(a+b)}{2}$}


\pspolygon[linecolor=white,linestyle=none,fillstyle=solid,fillcolor=gray!50](5,2.5)(0,2.5)(2.5,0)(5,0)

\psline[linewidth=1.2pt](5,2.5)(0,2.5)
\psline[linewidth=1.2pt,linecolor=gray](0,2.5)(2.5,0)
\psline[linewidth=1.2pt](2.5,0)(5,0)

\psline[linestyle=dashed,linewidth=1.2pt](1.5,1)(5,1)

\psline[linestyle=dotted,dotsep=1pt,linecolor=gray,fillstyle=none](-1,2.5)(0,2.5)(0,-1)
\psline[linestyle=dotted,dotsep=1pt,linecolor=gray,fillstyle=none](-1,1)(1.5,1)(1.5,-1)
\psline[linestyle=dotted,dotsep=1pt,linecolor=gray,fillstyle=none](-1,0)(2.5,0)(2.5,-1)

\pscircle*(0,2.5){0.05}
\pscircle*(1.5,1){0.05}
\pscircle*(2.5,0){0.05}

\end{pspicture}
\caption{Bifurcation diagram for the elliptical billiard with the Hooke's potential.}\label{fig:bif}
\end{figure}
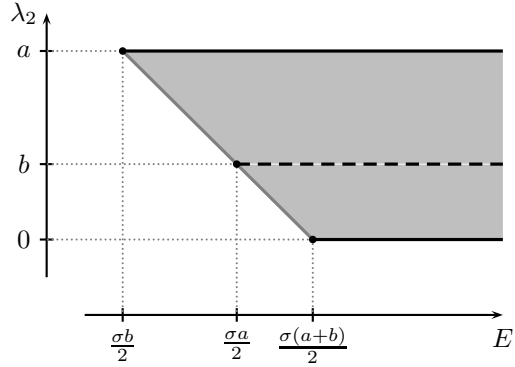 

\begin{thm}\label{th:fomenko}
The isoenergy manifolds for the billiard within the ellipse $\mathcal{E}$ with the Hooke's potential (\ref{eq:potential})
are shown in Figure \ref{fig:fom}.

There $\mathbf{A}$ and $\mathbf{B}$ are standard Fomenko atoms: $\mathbf{A}$ is a level set containing a single closed orbit, while $\mathbf{B}$ denotes a singular level set containing one closed orbit and two separatrices with homoclinic trajectories. 

By $\mathbf{T}$ we denoted a torus containing only closed orbits, and by $\mathbf{8}$ a product of figure $8$ with the circle -- a non-standard singular level set that contains only closed orbits.
\end{thm}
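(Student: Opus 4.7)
The plan is to compute, for each energy $E>\sigma b/2$, the Liouville foliation of the isoenergy $3$-manifold $Q_E=\{(\xi,v)\in\mathcal{M}_B:E(\xi,v)=E\}$ by taking $\lambda_2$ as the second independent integral and reading the bifurcations off the diagram of Figure \ref{fig:bif}. I would first partition the $E$-axis at the three corner energies $\sigma b/2<\sigma a/2<\sigma(a+b)/2$. For a regular $E$ in any one of the open subintervals, the horizontal slice in Figure \ref{fig:bif} meets the image of $\mu$ in an interval whose endpoints $\lambda_2=a$, $\lambda_2=a+b-2E/\sigma$, and (when $E>\sigma(a+b)/2$) $\lambda_2=0$ are the boundary critical values, while $\lambda_2=b$ is the only possible interior critical value and appears exactly when the slice crosses the focal dashed line.

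Next I would attach a Fomenko atom to each critical value of $\lambda_2|_{Q_E}$. At $\lambda_2=a$ the inner caustic degenerates to the $y$-axis segment, the critical set is the closed oscillation of the particle along the $y$-axis between $(0,\pm\sqrt{b})$, and the atom is $\mathbf{A}$. At $\lambda_2=a+b-2E/\sigma$ the two caustics coincide and the trajectory collapses to a closed curve tracing the common caustic, giving another $\mathbf{A}$. At $\lambda_2=0$ the inner caustic coincides with $\mathcal{E}$ itself and the orbit is the grazing boundary motion, again an $\mathbf{A}$. At $\lambda_2=b$ the trajectories pass through the foci; the singular fibre is the standard separatrix figure-eight times a circle and the atom is $\mathbf{B}$. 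The number of connected components of the regular tori between consecutive critical values is then fixed by time-reversal symmetry and by whether the orbits rotate around an elliptic inner caustic or cross between the branches of a hyperbolic one, in close analogy with the potential-free elliptical billiard treated in \cite{DragRadn2015fom}.

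Concatenating these atoms along $\lambda_2$ and recording the edge multiplicities produces the Fomenko graphs claimed in Figure \ref{fig:fom} for every regular energy. At the two critical energies $E=\sigma a/2$ and $E=\sigma(a+b)/2$ two critical $\lambda_2$-values coalesce: the focal value $\lambda_2=b$ meets the coinciding-caustics endpoint in the first case, and the grazing value $\lambda_2=0$ meets it in the second. The corresponding singular levels therefore fall outside the standard $\mathbf{A}$/$\mathbf{B}$ classification, and I would identify them directly by explicitly parametrising the orbits whose caustics realise the coalesced value and verifying that the level is entirely foliated by closed orbits, extracting its ambient topology: the torus $\mathbf{T}$ of closed orbits in one case and the figure-eight times circle $\mathbf{8}$ in the other.

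The principal obstacle is the identification of the non-standard atoms $\mathbf{T}$ and $\mathbf{8}$: the standard Bolsinov--Fomenko classification does not cover these degenerate singular levels, so their topology must be reconstructed by hand from the explicit orbit structure, including a verification that every orbit on the level closes up and that the orbits globally assemble into the claimed $\mathbf{T}$ respectively $\mathbf{8}$. A secondary care point is the correct enumeration of connected components of the regular Liouville tori between consecutive atoms, which dictates the edge multiplicities of the resulting Fomenko graphs.
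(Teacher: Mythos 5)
Your overall scheme---slicing the bifurcation diagram at fixed $E$ and attaching an atom to each critical value of $\lambda_2$---is the same as the paper's, but you misidentify the level set at the endpoint $\lambda_2=a+b-\frac{2}{\sigma}E$, and this error propagates into the placement of the non-standard atoms, which is the main content of the theorem. By Proposition \ref{prop:kaustike} one has $\lambda_1+\lambda_2=a+b-\frac{2}{\sigma}E$, so at that endpoint $\lambda_1=0$: it is the \emph{outer} caustic that coincides with the boundary $\mathcal{E}$, not the two caustics with each other. The trajectories on that level do not collapse to a single closed curve tracing a common caustic; they form a one-parameter family of closed orbits, namely all ellipses centred at the origin, inscribed in $\mathcal{E}$ and circumscribed about $\mathcal{C}_{\lambda_2}$ (including two degenerate segment-orbits when $\mathcal{C}_{\lambda_2}$ is a hyperbola). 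That level set is the non-standard atom $\mathbf{T}$ --- one torus when the inner caustic is a hyperbola, two tori when it is an ellipse --- not an $\mathbf{A}$-atom. Consequently $\mathbf{T}$ occurs at the end of the graph for \emph{every} energy in $(\frac{\sigma b}{2},\frac{\sigma(a+b)}{2})$, and not only at energies where two critical values of $\lambda_2$ coalesce, as you assert.

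Your treatment of the two transitional energies is also off. At $E=\frac{\sigma a}{2}$ the endpoint value $a+b-\frac{2}{\sigma}E$ equals the focal value $b$, and the coalesced level is indeed the non-standard $\mathbf{8}$ (all orbits closed, the self-intersection orbit being the $x$-axis diameter). But at $E=\frac{\sigma(a+b)}{2}$ the endpoint $\lambda_2=0$ gives $\lambda_1=0$ as well, so the orbit-ellipses must be simultaneously inscribed in and circumscribed about $\mathcal{E}$, hence equal to $\mathcal{E}$: the level consists of two isolated closed orbits (the two orientations of the boundary), i.e. two $\mathbf{A}$-atoms, exactly as for all larger energies --- there is no $\mathbf{T}$ there. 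Finally, the theorem's figure carries the numerical marks $r$, $\varepsilon$, $n$ for $E\ge\frac{\sigma(a+b)}{2}$; the paper obtains these by letting $E\to\infty$, where the segments straighten and the invariants must agree with those of the potential-free elliptical billiard. Your plan records only ``edge multiplicities'' and does not address the marks at all.
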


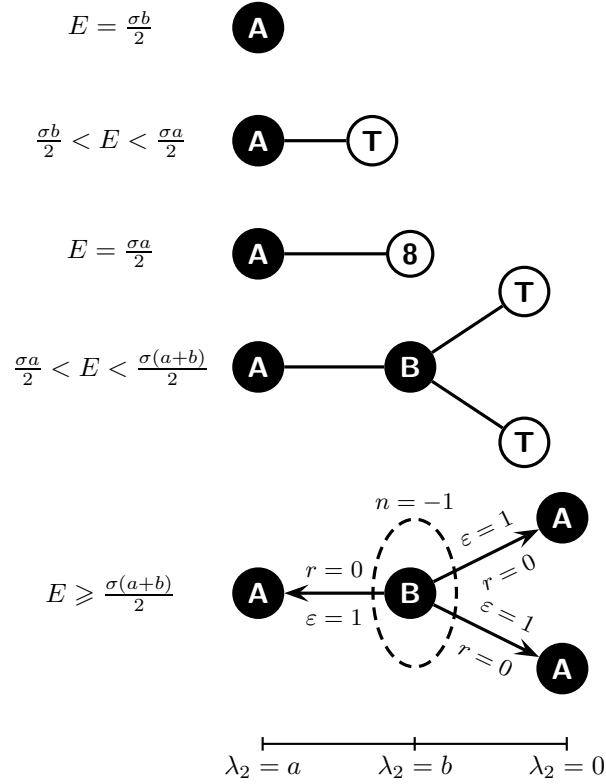
\begin{figure}[h]
\centering
\begin{pspicture}(-3,0)(4.5,10.5)


\psline(0,0.5)(4,0.5)

\psline(4,.41)(4,0.59)
\rput(4,0.2){$\lambda_2=0$}

\psline(2,.41)(2,0.59)
\rput(2,0.2){$\lambda_2=b$}

\psline(0,.41)(0,0.59)
\rput(0,0.2){$\lambda_2=a$}

\psset{
    fillstyle=solid,
    arrowsize=3pt 3,
    arrowlength=1.4,
    arrowinset=0.4,
    linewidth=1.2pt
    }

\rput(0,10){%
    \circlenode[fillcolor=black,linecolor=black]{1}{\color{white}\large\textbf{\textsf{A}}}
    }
\rput(-2,10){$E=\frac{\sigma b}{2}$}

\rput(0,8.5){%
    \circlenode[fillcolor=black,linecolor=black]{2}{\color{white}\large\textbf{\textsf{A}}}
    }
\rput(1.5,8.5){%
    \circlenode[fillcolor=white,linecolor=black]{3}{\large\textbf{\textsf{T}}}
    }
\ncline{-}{2}{3}    
\rput(-2,8.5){$\frac{\sigma b}{2}<E<\frac{\sigma a}{2}$}

\rput(0,7){%
    \circlenode[fillcolor=black,linecolor=black]{4}{\color{white}\large\textbf{\textsf{A}}}
    }
\rput(2,7){%
    \circlenode[fillcolor=white,linecolor=black]{5}{\large\textbf{\textsf{8}}}
    }
\ncline{-}{4}{5}    
\rput(-2,7){$E=\frac{\sigma a}{2}$}

\rput(0,5.5){%
    \circlenode[fillcolor=black,linecolor=black]{6}{\color{white}\large\textbf{\textsf{A}}}
    }
\rput(2,5.5){%
    \circlenode[fillcolor=black,linecolor=black]{7}{\color{white}\large\textbf{\textsf{B}}}
    }
\rput(3.5,4.5){%
    \circlenode[fillcolor=white,linecolor=black]{8}{\large\textbf{\textsf{T}}}
    }
\rput(3.5,6.5){%
    \circlenode[fillcolor=white,linecolor=black]{9}{\large\textbf{\textsf{T}}}
    }       
\ncline{-}{6}{7}
\ncline{-}{8}{7}    
\ncline{-}{9}{7}
\rput(-2,5.5){$\frac{\sigma a}{2}<E<\frac{\sigma(a+b)}{2}$}

\rput(0,2.5){%
    \circlenode[fillcolor=black,linecolor=black]{10}{\color{white}\large\textbf{\textsf{A}}}
    }
\rput(2,2.5){%
    \circlenode[fillcolor=black,linecolor=black]{11}{\color{white}\large\textbf{\textsf{B}}}
    }
\rput(4,1.5){%
    \circlenode[fillcolor=black,linecolor=black]{12}{\color{white}\large\textbf{\textsf{A}}}
    }
\rput(4,3.5){%
    \circlenode[fillcolor=black,linecolor=black]{13}{\color{white}\large\textbf{\textsf{A}}}
    }       
\ncline{<-}{10}{11}
\naput{\small $r=0$}
    \nbput{\small $\varepsilon=1$}
\ncline{->}{11}{12}
\nbput[nrot=-25.64,npos=0.6]{\small $r=0$}
    \naput[nrot=-25.64,npos=0.6]{\small $\varepsilon=1$}
\ncline{->}{11}{13}
\nbput[nrot=25.64,npos=0.6]{\small $r=0$}
    \naput[nrot=25.64,npos=0.6]{\small $\varepsilon=1$}
\rput(-2,2.5){$E\ge\frac{\sigma(a+b)}{2}$}   

\psellipse[fillstyle=none,linestyle=dashed](2,2.5)(0.57,1)  
\rput(2,3.7){\small $n=-1$}

\end{pspicture}

\caption{The Fomenko graphs for the elliptical billiard with the Hooke's potential.}\label{fig:fom}
\end{figure} 

\begin{proof}
The lowest energy level that contains billiard trajectories is $E=\frac{\sigma b}{2}$.
There, only one billiard trajectory exists -- the one placed along $y$-axis.
At the touching points with $\mathcal{E}$, the velocity equals $0$.

For $E\in(\frac{\sigma b}{2},\frac{\sigma a}{2})$, the inner caustic is always a hyperbola.
There is one Liouville torus corresponding to each non-singular level set.
The isoenergy manifold contains two singular level sets.
The first one corresponds to the degenerate inner caustic $\mathcal{C}_a$ and it contains only one orbit:
the trajectory is placed along $y$-axis, but now the particle hits the boundary with non-zero velocity.
That level set is represented by the Fomenko atom $\mathbf{A}$.

The second singular level set corresponds to the inner caustic with the parameter $\lambda_2=a+b-\frac{2}{\sigma}E$ and the outer caustic $\mathcal{E}$.
The orbits on that level set correspond to the ellipses with the centre at the origin, which are inscribed in $\mathcal{E}$ and touching $\mathcal{C}_{\lambda_2}$.
Among them, there are two ellipses that degenerate to the segments containing the origin with the endpoints at the intersection points of $\mathcal{E}$ and $\mathcal{C}_{\lambda_2}$.
The level set is a torus and it is represented by the non-standard atom $\mathbf{T}$.

For $E=\frac{\sigma a}{2}$, the inner caustic corresponding to each non-singular level set is again a hyperbola.
The non-standard singular level set, represented by the vertex $\mathbf{8}$, corresponds to the degenerate caustic $\mathcal{C}_{b}$.
The orbits on that level set correspond to the ellipses with the centre at the origin, which are inscribed in $\mathcal{E}$ and contain the foci.
One of these ellipses degenerates to the big diameter of $\mathcal{E}$.
The level set is isomorphic to the product of the figure $8$ with the circle.
The orbit on the self-intersection part of the set corresponds to the motion along the $x$-axis. 

For $E\in(\frac{\sigma a}{2},\frac{\sigma(a+b)}{2})$, there is one Liouville torus in each of the level sets that correspond to hyperbolas as inner caustics, and two Liouville tori if the inner caustic is an ellipse.
The non-standard singular level set contains closed orbits -- the trajectories are ellipses inscribed in $\mathcal{E}$ and circumscribed about the inner caustic.
That set is composed of two tori -- each one corresponds to one winding direction about the origin.

When $E\ge\frac{\sigma(a+b)}{2}$ there are no non-standard singular level sets.
The two $\mathbf{A}$-atoms on the right hand side of the Fomenko atom correspond to the limit case when the inner caustic coincides with $\mathcal{E}$.
When the energy approches infinity, the billiard segments become straight, so the numerical topological invariants will be the same as in the case of the billiard without potential.
\end{proof}

\begin{rem}
The bifurcation set of the system is the union of three half-lines:
$$
\lambda_2=a,\ E\ge\frac{\sigma b}{2},
\qquad
\lambda_2=b,\ E\ge\frac{\sigma a}{2},
\qquad
\lambda_2=0,\ E\ge\frac{\sigma (a+b)}{2},
$$
see Figure \ref{fig:bif}.
\end{rem}

\begin{rem}
The segments $\lambda_2=a+b-\frac2{\sigma}E\in(0,b)\cup(b,a)$ does not belong to the ciritical set.
They correspond to the value $\lambda_1=0$, that is to the case when the outer caustic $\mathcal{C}_{\lambda_1}$ coincides with the billiard border $\mathcal{E}$.
All trajectories mapped to those segments are ellipses.

For $\lambda_2\in(b,a)$, the trajectories are ellipses inscribed in $\mathcal{E}$ and circumscribed about the hyperbola $\mathcal{C}_{\lambda_2}$.
Among them, there are two degenerate ellipses -- segments containing the origin with the endpoints at the intersections of $\mathcal{E}$ and $\mathcal{C}_{\lambda_2}$.
Each non-degenerate ellipse is covered two orbits, corresponding to the clockwise and the counterclockwise motion about the origin.

For $\lambda_2\in(0,b)$, the trajectories are ellipses inscribed in $\mathcal{E}$ and circumscribed about the ellipse $\mathcal{C}_{\lambda_2}$.
There are two connected components of the level set, one containing the clockwise orbits, another counterclockwise ones.

The corresponding level sets are non-standard atoms $\mathbf{T}$.
Saturated neighbourhoods of these atoms in the isoenergy manifolds are shown in Figure \ref{fig:atomT}.
\end{rem}

\begin{rem}
Point $\lambda_2=b$, $E=\frac{\sigma a}{2}$ belongs to the critical set.
The trajectories are ellipses inscribed in $\mathcal{E}$ and containing the foci.
Only one trajectory is a critical orbit -- the one placed along $x$-axis.

The corresponding level set is a non-standard atom $\mathbf{8}$.
Its saturated neighbourhood is shown in Figure \ref{fig:atom8}.
\end{rem}

\begin{figure}[h]
\begin{minipage}{0.45\textwidth}
\centering
\begin{pspicture}(-2,-2)(3,2)

\psset{linecolor=black,fillstyle=solid}

\pscircle[fillcolor=gray!50,linecolor=white](0,0){2}

\pscircle[fillcolor=white,linecolor=black](0,0){1}

\psset{linewidth=0.8pt,linecolor=gray,fillstyle=none}

\rput(2.5,0){$\times\mathbf{S}^1$}

\end{pspicture}
\caption{Neighbourhood of $\mathbf{T}$.}\label{fig:atomT}
\end{minipage}
\begin{minipage}{0.45\textwidth}
\centering
\begin{pspicture}(-2,-2)(3,2)

\psset{linecolor=black,fillstyle=solid}
\pscircle[fillcolor=gray!50,linecolor=white](0,0){2}

\psline[linecolor=white](-1,0)(1,0)

\psplot{-1}{1}{x x mul 1 x x mul sub mul sqrt}
\psplot{-1}{1}{x x mul 1 x x mul sub mul sqrt neg}

\psset{fillstyle=none}
\psplot{-1}{1}{x x mul 1 x x mul sub mul sqrt}
\psplot{-1}{1}{x x mul 1 x x mul sub mul sqrt neg}

\pscircle*(0,0){0.05}

\psset{linewidth=0.8pt,linecolor=gray}

\psplot{-1.16877}{1.16877}{x x mul 1 x x mul sub mul 0.5 add sqrt}

\psplot{-1.16877}{1.16877}{x x mul 1 x x mul sub mul 0.5 add sqrt neg}

\rput(2.5,0){$\times\mathbf{S}^1$}

\end{pspicture}
\caption{Neighbourhood of $\mathbf{8}$.}\label{fig:atom8}
\end{minipage}
\end{figure}

Along with the bifurcation diagram, it is useful to construct the \emph{bifurcation complex}, that is the topological space, whose points are connected components of the level sets in the Liouville foliation, with the natural factor-topology \cite{BBM2010}.
The bifurcation complex for the elliptical billiard with the Hooke's potential is a cell complex composed of three cells with a common half-line, as shown in Figure \ref{fig:complex}.

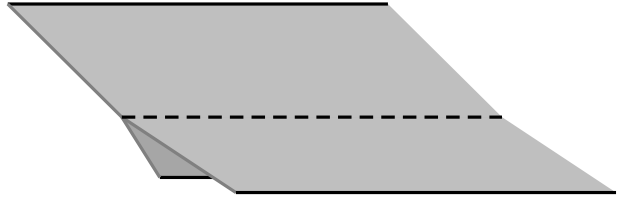
\begin{figure}[h]
\begin{pspicture}(-.5,-.5)(8,3.5)

\psset{linecolor=black,fillstyle=none}


\pspolygon[linecolor=white,linestyle=none,fillstyle=solid,fillcolor=gray!50](5,2.5)(0,2.5)(1.5,1)(6.5,1)

\pspolygon[linecolor=white,linestyle=none,fillstyle=solid,fillcolor=gray!70](6.5,1)(1.5,1)(2,0.2)(5,0.2)

\psline[linewidth=1.2pt](2,0.2)(5,0.2)

\pspolygon[linecolor=white,linestyle=none,fillstyle=solid,fillcolor=gray!50](6.5,1)(1.5,1)(3,0)(8,0)

\psline[linewidth=1.2pt](5,2.5)(0,2.5)
\psline[linewidth=1.2pt,linecolor=gray](0,2.5)(1.5,1)(3,0)
\psline[linewidth=1.2pt,linecolor=gray](1.5,1)(2,0.2)
\psline[linewidth=1.2pt,linestyle=dashed](1.5,1)(6.5,1)
\psline[linewidth=1.2pt](3,0)(8,0)

\end{pspicture}
\caption{Bifurcaton complex.}\label{fig:complex}
\end{figure}

\begin{cor}\label{cor:stability}
Critical periodic billiard trajectories along $y$-axis and the limit periodic trajectories along the ellipse $\mathcal{E}$ are stable, while
critical periodic trajectories along $x$-axis are unstable.
\end{cor}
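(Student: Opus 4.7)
The plan is to read the stability of each critical orbit directly off the type of Fomenko atom on which it lies, using the standard dictionary that an $\mathbf{A}$-atom corresponds to a non-degenerate elliptic (stable) closed orbit, while an atom containing a separatrix --- $\mathbf{B}$ or the non-standard $\mathbf{8}$ --- corresponds to a hyperbolic (unstable) closed orbit.

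First I would locate each critical trajectory on the graphs of Figure \ref{fig:fom}. The $y$-axis trajectory is, at the minimal energy $E=\sigma b/2$, the unique orbit of the whole isoenergy manifold and forms the single $\mathbf{A}$-atom there; for every higher energy it is the unique orbit on the singular level $\lambda_2=a$, represented by the leftmost $\mathbf{A}$-atom in each graph. The limit trajectories running along $\mathcal{E}$ itself appear only for $E\ge\sigma(a+b)/2$, where they occupy the two rightmost $\mathbf{A}$-atoms (the level $\lambda_2=0$). The $x$-axis trajectory, on the other hand, is the self-intersection orbit of the non-standard $\mathbf{8}$-atom at $E=\sigma a/2$ (see Figure \ref{fig:atom8}), and for $E>\sigma a/2$ it is the central closed orbit of the $\mathbf{B}$-atom appearing at $\lambda_2=b$.

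The conclusion then follows from the local structure of each atom. An $\mathbf{A}$-atom is a full solid torus foliated by invariant tori collapsing onto a single central circle; the Poincar\'e map on a transversal to this circle has an isolated fixed point surrounded by invariant curves, which is the definition of an elliptic, hence Lyapunov-stable, periodic orbit. This applies to the $y$-axis trajectory at every energy and to the limit trajectories along $\mathcal{E}$, yielding their stability. For the $\mathbf{B}$-atom the central orbit is the saddle of the figure-eight cross-section with two homoclinic separatrices, so it is hyperbolic and unstable. The self-intersection orbit of the $\mathbf{8}$-atom plays the analogous role: the transversal picture in Figure \ref{fig:atom8} is a figure eight with the critical orbit at the crossing and nearby orbits escaping along the two lobes, so this orbit is also hyperbolic, hence unstable.

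The only place where some care is required is the $\mathbf{8}$-atom, since it is non-standard and one must check that the self-intersection orbit is genuinely of saddle type rather than merely a geometric singularity. I would verify this by observing that in a neighbourhood of the $x$-axis trajectory the Liouville foliation is, up to a circle factor, a smooth foliation of an annulus by the level sets of a function with a non-degenerate saddle at the crossing point; the two lobes of the figure eight are precisely the local stable and unstable manifolds, and nearby level sets are hyperbolas approaching them. With this identification the standard hyperbolicity argument applies and the instability of the $x$-axis orbits at $E=\sigma a/2$ follows, completing the proof.
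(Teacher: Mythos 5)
Your proposal is correct and follows essentially the same route as the paper: both read the stability of each critical orbit directly off the atom type ($\mathbf{A}$ for the $y$-axis and limit trajectories along $\mathcal{E}$, $\mathbf{B}$ and $\mathbf{8}$ for the $x$-axis trajectory) as established in Theorem \ref{th:fomenko}. The paper additionally notes that the same conclusion can be read off the bifurcation complex (boundary versus interior points, following \cite{BBM2010}), but its primary argument is the one you give, which you in fact justify in somewhat more detail.
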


\begin{proof}
The statement follows from Theorem \ref{th:fomenko}, since the trajectories along $y$-axis they correspond to $\mathbf{A}$-atoms for $\lambda_2=a$, the limit trajectories along $\mathcal{E}$ to $\mathbf{A}$-atoms for $\lambda_2=0$, and the critical trajectories along $x$-axis are contained in the $\mathbf{B}$ and $\mathbf{8}$ atoms from $\lambda_2=b$.

Using \cite{BBM2010}, we can get that conclusion from the bifurcation complex in Figure \ref{fig:complex}, since the points corresponding to $\lambda_2=a$ and $\lambda_2=0$ are on the border of the complex, while inner points of the complex correspond to $\lambda_2=b$.
\end{proof}

\begin{rem}
The stability of periodic orbits of general billiards with potential is studied in \cite{Dullin1998}.
\end{rem}

\subsection*{Acknowledgments} 
{The author is grateful to the referee for great comments and questions, which led to the significant improvement of the manuscript.

The research which has led to this paper was partially
supported by the Serbian Ministry of Education and Science (Project
no.~174020: \emph{Geometry and Topology of Manifolds and Integrable
Dynamical Systems})
and by grant no.~FL120100094 from the Australian Research Council.}

\begin{bibdiv}
\begin{biblist}
\bib{ArnoldMMM}{book}{
  title={Mathematical Methods of Classical Mechanics},
  publisher={Springer Verlag},
  author={Arnold, V. I.},
  date={1978},
  address={New York},
}

\bib{BBM2010}{article}{
  author={Bolsinov, A. V.},
  author={Borisov, A. V.},
  author={Mamaev, I. S.},
  title={Topology and stability of integrable systems},
  language={Russian},
  journal={Uspekhi Mat. Nauk},
  volume={65},
  date={2010},
  number={2},
  pages={71--132},
  translation={ journal={Russian Math. Surveys}, volume={65}, date={2010}, number={2}, pages={259--317} },
}

\bib{BolFomBOOK}{book}{
  author={Bolsinov, A. V.},
  author={Fomenko, A. T.},
  title={Integrable Hamiltonian Systems: Geometry, Topology, Classification},
  publisher={Chapman and Hall/CRC},
  address={Boca Roton, Florida},
  date={2004},
}

\bib{BMF1990}{article}{
  author={Bolsinov, A. V.},
  author={Matveev, S. V.},
  author={Fomenko, A. T.},
  title={Topological classification of integrable Hamiltonian systems with two degrees of freedom. List of systems with small complexity},
  journal={Russian Math. Surveys},
  volume={45},
  date={1990},
  number={2},
  pages={59--94},
}

\bib{BO2006}{article}{
  author={Bolsinov, A. V.},
  author={Oshemkov, A. A.},
  title={Singularities of integrable Hamiltonian systems},
  book={ title={Topological Methods in the Theory of Integrable Systems}, publisher={Cambridge Scientific Publ.}, },
  date={2006},
  pages={1--67},
}

\bib{Drag1996}{article}{
  author={Dragovi\'c, Vladimir},
  title={On integrable potential perturbations of the Jacobi problem for the geodesics on the ellipsoid},
  journal={J. Phys. A: Math. Gen.},
  volume={29},
  number={13},
  date={1996},
  pages={L317--L321},
}

\bib{Drag2002}{article}{
  author={Dragovi\'c, Vladimir},
  title={The Appell hypergeometric functions and classical separable mechanical systems},
  journal={J. Phys. A: Math. Gen.},
  volume={35},
  number={9},
  date={2002},
  pages={2213--2221},
}

\bib{DragRadn2009}{article}{
  author={Dragovi\'c, Vladimir},
  author={Radnovi\'c, Milena},
  title={Bifurcations of Liouville tori in elliptical billiards},
  number={4-5},
  pages={479--494},
  journal={Regular and Chaotic Dynamics},
  volume={14},
  date={2009},
}

\bib{DragRadn2011book}{book}{
  author={Dragovi\'c, Vladimir},
  author={Radnovi\'c, Milena},
  title={Poncelet Porisms and Beyond},
  publisher={Springer Birkhauser},
  date={2011},
  place={Basel},
}

\bib{DragRadn2013publ}{article}{
  author={Dragovi\'c, Vladimir},
  author={Radnovi\'c, Milena},
  title={Minkowski plane, confocal conics, and billiards},
  journal={Publ. Inst. Math. (Beograd) (N.S.)},
  volume={94(108)},
  date={2013},
  pages={17--30},
}

\bib{DragRadn2015fom}{article}{
  author={Dragovi{\'c}, V.},
  author={Radnovi{\'c}, M.},
  title={Topological invariants for elliptical billiards and geodesics on ellipsoids in the Minkowski space},
  journal={Fundamental and applied mathematics},
  volume={20},
  date={2015},
}

\bib{Dullin1998}{article}{
  author={Dullin, Holger R.},
  title={Linear stability in billiards with potential},
  journal={Nonlinearity},
  volume={11},
  date={1998},
  number={1},
  pages={151--173},
}

\bib{Fed2001}{article}{
  author={Fedorov, Yuri},
  title={An ellipsoidal billiard with quadratic potential},
  journal={Funct. Anal. Appl.},
  volume={35},
  date={2001},
  number={3},
  pages={199--208},
}

\bib{Fokicheva2014}{article}{
  author={Fokicheva, V. V.},
  title={Classification of billiard motions in domains bounded by confocal parabolas},
  pages={1201--1221},
  volume={205},
  number={8},
  date={2014},
  journal={Sbornik: mathematics},
}

\bib{KozTrBIL}{book}{
  author={Kozlov, Valery},
  author={Treshch\"ev, Dmitry},
  title={Billiards},
  publisher={Amer. Math. Soc.},
  address={Providence RI},
  date={1991},
}

\bib{Kozlov1995}{article}{
  author={Kozlov, V. V.},
  title={Some integrable generalizations of the Jacobi problem on geodesics on an ellipsoid},
  language={Russian},
  journal={Prikl. Mat. Mekh.},
  volume={59},
  date={1995},
  pages={3--9},
  number={1},
  translation={ language={English}, journal={J. Appl. Math. Mech.}, volume={59}, number={1}, date={1995}, pages={1--7} },
}

\end{biblist}
\end{bibdiv}
\end{document}